\title[High-dimensional PDEs with Barron data]{Some observations on high-dimensional partial differential equations with Barron data}
\theoremstyle{plain}
\newtheorem{theorem}{Theorem}[section]
\newtheorem*{theorem*}{Theorem}
\newtheorem*{"theorem"}{``Theorem''}
\newtheorem{lemma}[theorem]{Lemma}
\theoremstyle{definition}
\theoremstyle{remark}
\newtheorem{remark}[theorem]{Remark}
\newtheorem{example}[theorem]{Example}
\numberwithin{equation}{section}
\newenvironment{pde}{\left\{\begin{array}{rll} } {\end{array}\right.}
\newcommand{\N}{\mathbb N}
\newcommand{\R}{\mathbb R} 
\renewcommand{\P}{{\mathbb P}}
\newcommand{\dist}{{\rm dist}}
\renewcommand{\div}{{\rm div}}
\newcommand{\W}{{\mathcal W}}
\renewcommand{\d}{\mathrm{d}}
\newcommand{\dy}{\,\mathrm{d}y}
\newcommand{\dz}{\,\mathrm{d}z}
\newcommand{\ds}{\,\mathrm{d}s}
\newcommand{\dr}{\,\mathrm{d}r}
\newcommand{\ReLU}{\mathrm{ReLU}}
\newcommand{\eps}{\varepsilon}
\newcommand{\average}{{\mathchoice {\kern1ex\vcenter{\hrule height.4pt
width 6pt depth0pt} \kern-9.7pt} {\kern1ex\vcenter{\hrule
height.4pt width 4.3pt depth0pt} \kern-7pt} {} {} }}
\newcommand\showlabel{\addtocounter{equation}{1}\tag{\theequation}}
\newcommand{\B}{\mathcal{B}}
\newcommand{\sw}[1]{\textcolor{black}{#1}}
\begin{document}

\author{Weinan E}
\address{Weinan E\\
Department of Mathematics and Program for Applied and Computational Mathematics\\
Princeton University\\
Princeton, NJ 08544
}
\email{weinan@math.princeton.edu}

\author{Stephan Wojtowytsch}
\address{Stephan Wojtowytsch\\
Program for Applied and Computational Mathematics\\
Princeton University\\
Princeton, NJ 08544
}
\email{stephanw@princeton.edu}

\date{\today}

\subjclass[2020]{68T07, 35C15, 65M80}
\keywords{Neural network, partial differential equation, Barron space, high-dimensional PDE}

\maketitle

\begin{abstract}
We use explicit representation formulas to show that solutions to certain partial differential equations lie in Barron spaces or multilayer spaces if the PDE data lie in such function spaces. Consequently, these solutions can be represented efficiently using artificial neural networks, even in high dimension. Conversely, we present examples in which the solution fails to lie in the function space associated to a neural network under consideration.
\end{abstract}

\section{Introduction}
Neural network-based machine learning has had impressive success in solving very high dimensional  PDEs, see e.g.\ 
\cite{weinan2017deep, weinan2018deep, han2018solving, sirignano2018dgm, chaudhari2018deep, wang2018deepmd, raissi2019physics, bhattacharya2020model, mao2020physics, kissas2020machine, sun2020neupde, chen2020comparison}
 and many others. 
 The fact that such high dimensional PDEs can be solved with relative ease suggests that the solutions of these PDEs must have
 relatively low complexity in some sense.  The important question that we need to address is: How can we
 quantify such low complexity?
 
 In low dimension, we are accustomed to using smoothness to quantify complexity. Roughly speaking,
 a function has low complexity if it can be easily approximated using polynomials or piecewise polynomials,
 and the convergence rate of the polynomial or piecewise polynomial approximation is characterized by the 
 smoothness of the target function.  A host of function spaces have been proposed, such as the Sobolev
 spaces, Besov spaces, H\"older- and $C^k$ spaces, etc, to quantify the smoothness of functions. 
 Based on these, the analysis of numerical
 approximations to solutions of PDEs can be carried out in three steps:
 
 \begin{enumerate}
 \item Approximation theory: relating the convergence rate to the function class that the target function belongs to. 
 This stage is purely about function approximation. No PDEs are involved.
 \item Regularity theory: characterizing the function class that solutions of a given PDE  belong to. This is pure PDE theory.
 \item Numerical analysis: proving that a numerical algorithm is convergent, stable, and characterize its order of accuracy.
 \end{enumerate}
 
 We aim to carry out the same program in high dimension, with polynomials and piecewise polynomials replaced
 by neural networks. In high dimension, the most important question is whether there is a ``curse of 
 dimensionality'', i.e. whether the convergence rate deteriorates as the dimensionality is increased.
 
 In this regard, our gold standard is the Monte Carlo algorithm for computing expectations.
 The task of approximation theory is to establish similar results for function approximation for machine learning models,
 namely, given a particular machine learning model, one would like to identify the class of functions for which the given 
 machine learning model can approximate with dimension-independent convergence rate.
 It is well known that the number of parameters of a neural network to approximate a general $C^k$-function in dimension $d$ in the $L^\infty$-topology to accuracy $\eps$ generally scales like $\eps^{-\frac{d}k}$ \cite{yarotsky2017error} (up to a logarithmic factor). 
Classical (H\"older, Sobolev, Besov, \dots) function spaces thus face the ``curse of dimensionality'' and therefore are not
suited for our purpose.
 
 However, Monte Carlo-like convergence rates can indeed be established for the most interesting machine learning models
 by identifying the appropriate function spaces.
 For the random feature model, the appropriate function spaces are the extensions of the classical reproducing kernel 
 Hilbert space \cite{review_article}.
 For two-layer neural networks, those function spaces are the relatively well-studied {\em Barron spaces} \cite{weinan2019lei,barron_new}, while for deeper neural networks,  {\em tree-like function spaces} \cite{deep_barron} 
 seem to provide good candidates (at least for ReLU-activated networks). These spaces combine two convenient properties: Functions can be approximated well using a small set of parameters, and few data samples suffice to assess whether a function is a suitable candidate as the solution to a minimization problem. 

This paper is concerned in the second question listed above in high dimension.
For the reasons stated above, we must consider the question whether the solution of a PDE lie in a (Barron or tree-like) function space if the data lie in such a space. In other words, if the problem data are represented by a neural network, can the same be said for the solution, and is the solution network of the same depth as the data network or deeper? We believe that these should be the among the principal considerations of a modern regularity theory for high-dimensional PDEs. We note that a somewhat different approach to a similar question is taken in  \cite{grohs2018proof, hutzenthaler2020proof}.

In this paper, we show that solutions to three prototypical PDEs (screened Poisson equation, heat equation, viscous Hamilton-Jacobi equation) lie in appropriate function spaces associated with neural network models. These equations are considered on the whole space and the argument is based on explicit representation formulas. In bounded domains on the other hand, we show that even harmonic functions with Barron function boundary data may fail to be Barron functions, and we discuss obstacles in trying to replicate classical regularity theory in spaces of neural networks. 

While we do not claim great generality in the problems we treat, we cover some important special cases and counterexamples. As a corollary, equations whose solutions lie in a (Barron or tree-like) function space associated to neural networks cannot be considered as fair benchmark problems for computational solvers for general elliptic or parabolic PDEs: If the data are represented by a neural network, then so is the exact solution, and approximate solutions converge to the analytic solution at a dimension-independent rate as the number of parameters approaches infinity. The performance is therefore likely to be much better in this setting than in potential applications without this high degree of compatibility. 

The article is structured as follows. In the remainder of the introduction, we give a brief summary of some results concerning Barron space and tree-like function spaces. In Section \ref{section prototypes}, we consider the Poisson equation, screened Poisson equation, heat equation, and viscous Hamilton-Jacobi equation on the whole space. In two cases, we show that solutions lie in Barron space and in one case, solutions lie in a tree-like function space of depth four. In Section \ref{section boundary}, we consider equations on bounded domains and demonstrate that boundary values can make a big difference from the perspective of function representation. We also discuss the main philosophical differences between classical function spaces and spaces of neural network functions, which a novel regularity theory needs to account for.

\subsection{Previous work}

\sw{
The class of partial differential equations is diverse enough to ensure that no single theory captures the properties of all equations, and the existing literature is too large to be reviewed here. Even  the notion of what a solution is depends on the equation under consideration, giving rise to concepts such as weak solutions, viscosity solutions, entropy solutions etc. The numerical schemes to find these solutions are as diverse as the equations themselves. As a rule, many methods perform well in low spatial dimension $d\in \{1,2,3\}$, which captures many problems of applied science. Other problems, such as the Boltzmann equation, Schr\"odinger equations for many particle systems, or Black-Scholes equations in mathematical science, are posed in much higher dimension, and require different methods.
}

\sw{
In low dimension, elliptic and parabolic equations are often considered in Sobolev spaces. Finite element methods, which find approximate solutions to a PDE in finite-dimensional ansatz spaces, are empirically successful and allow rigorous a priori and a posteriori error analysis. The ansatz spaces are often chosen as spaces of piecewise linear functions on the triangulation of a domain. As a grid based method, the curse of dimensionality renders this approach unusable when the dimension becomes moderately high.
}

\sw{
Evading the curse of dimensionality when solving high-dimensional PDEs requires strong assumptions. If the right hand side even of the Poisson equation is merely in $C^{0,\alpha}$ or $L^2$, the solution cannot generally be more regular than $C^{2,\alpha}$ or $H^2$. These spaces are so large (e.g.\ in terms of Kolmogorov width) that the curse of dimensionality cannot be evaded.  
}

\sw{
Based on a hierarchical decomposition of approximating spaces, sparse grid methods can be used to solve equations in higher dimension. In their theoretical analysis, the role of classical Sobolev spaces is taken by Sobolev spaces with dominating mixed derivatives (imagine spaces where $\partial_1\partial_2u$ is controlled, but $\partial_1\partial_1u$ is not). These methods have been used successfully in medium high dimension, although the a priori regularity estimates which underly classical finite element analysis are less developed here to the best of our knowledge. An introduction can be found e.g.\ in \cite{garcke2012sparse}.
}

\sw{
Function classes of neural networks have a non-linear structure, which allows them to avoid the curse of dimensionality in approximating functions in large classes where {\em any} linear model suffers from the CoD, see e.g.\ \cite{barron1993universal}. Following their success in data science, it is a natural question whether they have the same potential in high-dimensional PDEs.
}

\sw{
It has been observed by \cite{grohs2018proof,jentzen2018proof,hutzenthaler2020proof,hornung2020space,darbon2020overcoming} that deep neural networks can approximate (viscosity) solutions to different types of partial differential equations without the curse of dimensionality, assuming that the problem data are given by neural networks as well. Generally, proofs of these results are based on links between the PDE and stochastic analysis, and showing that solutions to SDEs can be approximated sufficiently well by neural networks with the Monte-Carlo rate, even in high dimension. A more extensive list of empirical works can be found in these references.
}

\sw{
In this article, we follow a similar philosophy of explicit representation, but we consider a more restricted and technically much simpler setting. In this setting, we prove stronger results:
\begin{enumerate}
\item The neural networks we consider have one hidden layer (linear PDEs) or three hidden layers (viscous Hamilton-Jacobi equation), and are therefore much shallower than the deep networks considered elsewhere.
\item In certain cases, solutions to the PDEs are in Barron space. This not oly implies that they can be approximated without the CoD, but also that integral quantities can be estimated well using few samples, even if the solution of the PDE depends on these data samples. This follows from the fact that the unit ball in Barron space has low Rademacher complexity.
\item On the other hand, we show by way of counterexample that the solution of the Laplace equation in the unit ball with Barron boundary data is generally {\em not} given by a Barron function, possibly shedding light on the requirement of depth.
\end{enumerate}
}

\sw{
Our results can be viewed more in the context of function-space based regularity theory, whereas existing results directly address the approximation of solutions without an intermediate step.
}

\subsection{A brief review of Barron and tree-like function spaces}

Two-layer neural networks can be represented as 
\begin{equation}\label{eq mean field network}
\sw{f_m}(x) = \frac1m\sum_{i=1}^m a_i\,\sigma(w_i^Tx+b_i).
\end{equation}
The parameters $(a_i, w_i,b_i)\in \R\times \R^d\times \R$ are referred to as the weights of the network and $\sigma:\R\to\R$ as the activation function. In some parts of this article, we will focus on the popular ReLU-activation function $\sigma(z) = \max\{z,0\}$, but many arguments go through for more general activation $\sigma$. 

In the infinite width limit for the hidden layer, the normalized sum is replaced by a probability integral
\begin{equation}
f_\pi(x) = \int_{\R^{d+2}} a\,\sigma(w^Tx+b)\,\pi(\d a\otimes \d w\otimes \d b) \sw{= \mathbb E_{(a,w,b)\sim\pi} \big[a\,\sigma(w^Tx+b)}\big]
\end{equation}
On this space, a norm is defined by
\[
\|f\|_\B = \inf \left\{\int_{\R^{d+2}} |a|\,\big[|w|+ |b|\big]\,\pi(\d a\otimes \d w\otimes \d b) : \pi\text{ s.t. } f=f_\pi\right\}.
\]
The formula has to be modified slightly for non-ReLU activation functions \cite{li2020complexity} and \cite[Appendix A]{approximationarticle}. If for example $\lim_{z\to \pm \infty}\sigma(z) = \pm 1$, we may choose
\[
\|f\|_\B = \inf \left\{\int_{\R^{d+2}} |a|\,\big[|w|+ 1\big]\,\pi(\d a\otimes \d w\otimes \d b) : \pi\text{ s.t. } f=f_\pi\right\}
\]
without strong dependence on the bias variable $b$. We give some examples of functions in Barron space or not in Barron space below.

The function space theory for multi-layer networks is more complicated, and results are currently only available for ReLU activation. We do not go into detail concerning the scale of tree-like function spaces $\W^L$ associated to multi-layer neural networks here, but note the following key results.
\begin{enumerate}
\item $\W^1=\B$, i.e.\ the tree-like function space of depth $1$ coincides with Barron space. 
\item If $f:\R^k\to\R$ and $g:\R^d\to \R^k$ are functions in the tree-like function spaces $\W^\ell$ and $\W^L$ respectively (componentwise), then the composition $f\circ g:\R^d\to \R$ is in the tree-like function space $\W^{L+\ell}$ and the estimate
\[
\|f\circ g\|_{\W^{L+\ell}} \leq \max_{1\leq i\leq k} \|g_i\|_{\W^L} \,\|f\|_{\W^\ell}
\]
holds.
\item In particular, the product of two functions $f,g\in \W^L$ is generally not in $\W^L$, but in $\W^{L+1}$ since the map $(x,y)\mapsto xy$ is in Barron space (on bounded sets in $\R^2$).
\end{enumerate}

All results can be found in \cite{deep_barron}. We recall the following properties of Barron space:

\begin{enumerate}
\item If $f\in H^s(\R^d)$ for $s>\frac{d}2+2$, then $f\in \B$, i.e.\ sufficiently smooth functions admit a Barron representation -- see \cite[Theorem 3.1]{barron_new}, based on an argument in \cite[Section IX]{barron1993universal}.
\item Barron space embeds into the space of Lipschitz-continuous functions.
\item If $f\in \B$, then $f= \sum_{i=1}^\infty f_i$ where $f_i (x) = g_i(P_ix+b_i)$
\begin{enumerate}
\item $g_i$ is $C^1$-smooth except at the origin,
\item $P_i$ is an orthogonal projection on a $k_i$-dimensional subspace for some $0\leq k_i\leq d-1$,
\item $b_i$ is a shift vector.
\end{enumerate}
The proof can be found in \cite[Lemma 5.2]{barron_new}.
\item For a Barron function $f$ and a probability measure $\P$ on $[-R,R]^d$, there exist $m$ parameters $(a_i, w_i, b_i)\in \R^{d+2}$ \sw{and a two-layer neural network $f_m$ as in \eqref{eq mean field network}} such that
\begin{equation}
\|f-f_m\|_{L^2(\P)}\leq \frac{\max\{1,R\}\,\|f\|_\B}{\sqrt m}
\end{equation}
or
\begin{equation} 
\|f-f_m\|_{L^\infty[-R,R]}\leq \frac{d\,\max\{1,R\}\,\|f\|_\B}{\sqrt m}
\end{equation}
or 
\begin{equation}
\|f-f_m\|_{H^1((0,1)^d)} \leq \|f\|_\B \sqrt{\frac{d+1}m}.
\end{equation}
The proof in the Hilbert space cases is based on the Maurey-Jones-Barron Lemma (e.g. \cite[Lemma 1]{barron1993universal}), whereas the proof for $L^\infty$-approximation uses Rademacher complexities (see e.g.\ \cite{review_article}).
\end{enumerate}

\begin{example}
Examples of Barron functions are 
\begin{enumerate}
\item the single neuron activation function $f(x) = a\,\sigma(w^Tx+b)$, 
\item the $\ell^2$-norm function
\[
f(x) = c_d \int_{S^{d-1}} \sigma(w^Tx)\,\pi^0(\d w)
\]
which is represented by the uniform distribution on the unit sphere (for ReLU activation), and
\item any sufficiently smooth function on $\R^d$.
\end{enumerate}
Examples of functions which are not Barron are all functions which fail to be Lipschitz continuous and functions which are non-differentiable on a set which is not compatible with the structure theorem, e.g. $f(x) = \max\{x_1,\dots, x_d\}$ and $f(x)= \dist_{\ell^2}(x, S^{d-1})$.
\end{example}

\begin{remark}
Define the auxiliary norm
\[
\|f\|_{aux} := \int_{\R^d} |\hat f|(\xi)\cdot |\xi|\d\xi
\]
on functions $f:\R^d\to\R$. According to \cite{barron1993universal}, the estimate $\|f\|_\B \leq \|f\|_{aux}$ holds, and many early works on neural networks used $\|\cdot\|_{aux}$ in place of the Barron norm. Unlike the auxiliary norm, the modern Barron norm is automatically adaptive to the activation function $\sigma$ and Barron space is much larger than the set of functions on which the auxiliary norm is finite (which is a separable subset of the non-separable Barron space). 

Most importantly, the auxiliary norm is implicitly dimension-dependent. If $g:\R^k\to \R$ is a Barron function and $f:\R^d\to\R$ is defined as $f(x) = g(x_1,\dots, x_k)$ for $k< d$, then the auxiliary norm of $f$ is automatically infinite (since $f$ does not decay at infinity). Even when considering bounded sets and extension theorems, the auxiliary norm is much larger than the Barron norm, which allows us to capture the dependence on low-dimensional structures efficiently. 
\end{remark}

\section{Prototypical equations}\label{section prototypes}

In this article, we study four PDEs for which explicit representation formulas are available. The prototypical examples include a linear elliptic, linear parabolic, and viscous Hamilton-Jacobi equation.
The key-ingredient in all considerations is a Green's function representation with a translation-invariant and rapidly decaying Green's function.

\subsection{The screened Poisson equation}

The fundamental solution $G(x) = c_d\,|x|^{2-d}$ of $-\Delta G = \delta_0$ on $\R^d$ (for $d>2$) decays so slowly at infinity that we can only use it to solve the Poisson equation 
\begin{equation}\label{eq Laplace}
-\Delta u = f
\end{equation}
if $f$ is compactly supported (or at least decays rapidly at infinity), since otherwise the convolution integral fails to exist. Neither condition is particularly compatible with the superposition representation of one-dimensional profiles which is characteristic of neural networks.
As a model problem for second order linear elliptic equations, we therefore study the screened Poisson equation
\begin{equation}\label{eq screened Laplace}
(-\Delta + \lambda^2) u = f
\end{equation}
on the whole space $\R^d$ for some $\lambda>0$. \sw{Solutions are given explicitly as convolutions with the fundamental solution of \eqref{eq screened Laplace} and -- for large dimension -- as superpositions of one-dimensional solutions..}

\begin{lemma}\label{lemma screened Laplace}
\sw{
Assume that $f\in \B$ in dimension $d$ and $u$ solves \eqref{eq screened Laplace}. Then
\begin{equation}
\|u\|_\B \leq \lambda^{-2}\, \|f\|_\B
\end{equation}
if $\sigma$ has finite limits at $\pm \infty$ and
\begin{equation}
\|u\|_\B \leq \big[\lambda^{-2} + 2\,\lambda^{-3}\big]\,\|f\|_\B.
\end{equation}
if $\sigma = \mathrm{ReLU}$.
}
\end{lemma}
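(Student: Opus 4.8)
The plan is to exploit the explicit convolution representation $u = G_\lambda * f$, where $G_\lambda$ is the fundamental solution of $(-\Delta+\lambda^2)$ on $\R^d$, and to show that convolution with $G_\lambda$ maps Barron space into itself with the claimed norm bound. The key structural observation is that Barron space behaves well under convolution: if $f = f_\pi$ with $\pi$ a measure on $\R^{d+2}$, then for a fixed profile $z\mapsto a\,\sigma(w^Tx+b)$ one has $\big(G_\lambda * [a\,\sigma(w^T\cdot + b)]\big)(x) = a\int_{\R^d} G_\lambda(y)\,\sigma(w^Tx - w^Ty + b)\,\dy$, which is \emph{not} itself a single neuron, so the naive "convolve each neuron" idea does not immediately close. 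Instead I would first reduce to the case $f$ a single neuron $x\mapsto a\,\sigma(w^Tx+b)$ by linearity and the definition of the Barron norm as an infimum over representing measures, and then analyze $u_{a,w,b} := G_\lambda * [a\,\sigma(w^T\cdot+b)]$ directly.

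The second and central step is to recognize that $u_{a,w,b}$ is again a function of the single variable $t = w^Tx$ (up to normalization): writing $\hat w = w/|w|$, the convolution $G_\lambda * [\sigma(w^T\cdot + b)]$ depends on $x$ only through $w^Tx$, because $G_\lambda$ is radial and we can integrate out the $d-1$ directions orthogonal to $w$. Concretely, $u_{a,w,b}(x) = a\,\Phi_\lambda(w^Tx + b)$ where $\Phi_\lambda(t) = \int_{\R} g_{\lambda,|w|}(s)\,\sigma(t-s)\,\ds$ and $g_{\lambda,|w|}$ is the one-dimensional marginal of $G_\lambda$ along the direction $\hat w$, rescaled by $|w|$. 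One then needs two facts about $g_{\lambda,r}$ (here $r=|w|$): that $\int_\R g_{\lambda,r}(s)\,\ds = r\,\lambda^{-2}$ (this is just $\widehat{G_\lambda}(0) = \lambda^{-2}$ applied along the line, with the Jacobian factor $r$ from the change of variables $s = r\,\tau$), and decay/integrability estimates such as $\int_\R |s|\,g_{\lambda,r}(s)\,\ds \le C\,r\,\lambda^{-3}$. For the non-ReLU case with $\sigma$ bounded, $\Phi_\lambda$ is itself a bounded profile (a smoothing of $\sigma$), and one shows $\|u_{a,w,b}\|_\B \le |a|\,[\,|w|\,\lambda^{-2}\cdot(\text{something})\,]$; more carefully, since $\Phi_\lambda$ is a $C^1$ function of one variable that inherits the finite limits of $\sigma$ scaled by $\lambda^{-2}$, it can be written as a one-dimensional Barron function (an integral over neurons in the single direction $\hat w$) with Barron norm controlled by $\lambda^{-2}(|w| + 1)$ times $|a|$, which after integrating against $\pi$ and taking the infimum gives $\|u\|_\B \le \lambda^{-2}\|f\|_\B$. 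For ReLU, $\Phi_\lambda$ grows linearly at $+\infty$ (since $\sigma$ does), so one must separate $\Phi_\lambda$ into a linear part plus a bounded correction; the linear part contributes $\lambda^{-2}|w|$ to the Barron norm via a single neuron, and the bounded correction is handled as before, contributing the extra $2\lambda^{-3}$ term from the first-moment bound on $g_{\lambda,r}$. Assembling, $\|u_{a,w,b}\|_\B \le |a|\,(|w|+1)\,[\lambda^{-2} + 2\lambda^{-3}]$ (up to verifying the exact constants), and summing over the representing measure gives the claim.

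A cleaner alternative that I would try in parallel is the Fourier-analytic route via the auxiliary norm or via the resolvent identity directly on representing measures: since $(-\Delta + \lambda^2)^{-1}$ acts on the profile $\sigma(w^Tx+b)$ by the one-dimensional operator $(-|w|^2 \partial_t^2 + \lambda^2)^{-1}$ applied to $\sigma(t)$ at $t = w^Tx+b$, one can solve the ODE $(-r^2 \Phi'' + \lambda^2 \Phi) = \sigma$ explicitly using the Green's function $\frac{1}{2r\lambda} e^{-\lambda|s|/r}$, which makes both moment computations $\int \frac{1}{2r\lambda}e^{-\lambda|s|/r}\,\ds = \lambda^{-2}$ and $\int |s|\,\frac{1}{2r\lambda}e^{-\lambda|s|/r}\,\ds = r\,\lambda^{-3}$ completely elementary, and bypasses any delicate computation of $d$-dimensional marginals of $G_\lambda$. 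This is likely the way the authors proceed.

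The main obstacle I anticipate is \emph{not} any single estimate but the bookkeeping of how a convolved single neuron, which is a one-dimensional profile in the direction $\hat w = w/|w|$, gets re-expressed as an admissible Barron representing measure with the correct weighting of the bias versus the linear part — in particular handling the ReLU case where the profile is unbounded, and making sure the factor $(|w| + b$-dependence$)$ in the Barron norm is correctly absorbed (this is exactly why the two cases $\sigma$ bounded vs.\ $\sigma = \mathrm{ReLU}$ give different constants, and why the bias-insensitive form of the Barron norm is invoked in the excerpt for bounded $\sigma$). Verifying that the exact constants $\lambda^{-2}$ and $\lambda^{-2} + 2\lambda^{-3}$ come out — rather than merely $O(\lambda^{-2})$ and $O(\lambda^{-2} + \lambda^{-3})$ — will require care with the $r = |w|$ scaling in the ODE Green's function and with the normalization conventions in the definition of $\|\cdot\|_\B$.
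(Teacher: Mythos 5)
Your proposal is correct and follows essentially the same route as the paper: reduce to a single neuron by linearity and the convex-hull structure of the Barron unit ball, observe that the solution with ridge right-hand side is itself a ridge function, represent the convolution $G_\lambda * [a\,\sigma(w^T\cdot+b)]$ as a mixture of shifted neurons, and bound the Barron norm by the zeroth and first moments of the Green's function (only the zeroth moment entering when $\sigma$ is bounded, since the bias then does not contribute to the norm). The paper computes those moments from the explicit $d=3$ fundamental solution $e^{-\lambda|x|}/(4\pi|x|)$ together with the crude bound $|w^Ty|\le |w|\,|y|$ --- which is where the constant $2\lambda^{-3}$ comes from --- rather than from the one-dimensional ODE Green's function $\frac{1}{2r\lambda}e^{-\lambda|s|/r}$ you suggest as an alternative (which would in fact give the slightly sharper constant $\lambda^{-3}$).
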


\begin{proof}[Proof of Lemma \ref{lemma screened Laplace}]
\sw{
For $d=3$, the fundamental solution $G(x) = \frac{e^{-\lambda|x|}}{4\pi\,|x|}$ of the screened Poisson equation \eqref{eq screened Laplace} is well-known. Observe that
\begin{align*}
\frac1{4\pi} \int_{\R^3} e^{-\lambda |y|}{|y|}\dy &= \int_0^\infty e^{-\lambda r} r^{-1}\,r^{3-1}\dr\\
	&= \lambda^{-2}\int_0^\infty r\lambda r\,e^{-\lambda r} \,\lambda\dr\\
	&= \lambda^{-2}\\
\frac1{4\pi} \int_{\R^3} e^{-\lambda |y|} \dy &= 2\,\lambda^{-3}.
\end{align*}
 Thus, if $d=3$ and $f(x) = a\,\sigma(w_1x_1 +b)$ we have
\begin{align*}
u(x) &= \frac{a}{4\pi} \int_{\R^3} \frac{\sigma(w_1(x_1-y_1) + b)}\,\frac{e^{-\lambda\,|y|}}{|y|}\dy\\
\|u\|_\B &\leq |a|\,\int_{\R^3} \big\{|w_1+|b|\big\}| \,\frac{e^{-\lambda\,|y|}}{|y|} + |w_1|\,|y|\,\frac{e^{-\lambda\,|y|}}{|y|}\dy\\
	&\leq \frac{|a|\big\{|w_1+|b|\big\}}{\lambda^2} + \frac{2\,|a|\,|w_1|}{\lambda^3}.
\end{align*}
If $\sigma$ is bounded, the bias term is replaced by $1$. 
}

\sw{
In arbitrary dimension, the unit ball in Barron space is the closed convex hull of functions of the form $f(x) = a\,\sigma(w^Tx+b)$ such that $|a|\,\big[|w|+|b|\big]\leq 1$ (ReLU case). When considering the Euclidean norm on $\R^d$, after a rotation, we note that the solution $u$ of \eqref{eq screened Laplace} with right hand side $a\,\sigma(w^T\cdot +b)$ satisfies
\[
\|u\|_\B \leq |a|\,\{|w| + |b|\}\left[\lambda^{-2} + 2\,\lambda^{-3}\right]
\]
since $u$ only depends on $w^Tx$ and is constant in all directions orthogonal to $w$. The estimate follows in the general case by linearity.
}
\end{proof}

\subsection{The Poisson equation}\label{section laplace}

While the importance of the forcing term $\lambda u$ in the argument above is clear, it is important to ask whether the term is in fact necessary or just convenient. We show that in many cases, it is the \sw{latter}, at least in naive models. Consider the equation
\begin{equation}
-\Delta u = \ReLU(x_1)
\end{equation}
which is solved by
\begin{equation}
u(x) = -\frac{\max\{0,x_1\}^3}6 + h(x), \qquad \Delta h(x) = 0.
\end{equation}
Since any function $u$ in ReLU-Barron space grows at most linearly at infinity, $\frac{\max\{0,x_1\}^3}6$ is not a ReLU-Barron function. This fast growth cannot be compensated by the free harmonic function $h$. Note that $h$ could grow at most cubically at infinity and would therefore have to be a cubic polynomial by Liouville's theorem \cite{boas1988short}. Then
\[
\lim_{t\to \infty} \frac{h(te_1)}{h(-te_1)} = \pm 1
\]
depending on the degree of $h$, so $u$ could not be bounded by a subcubic function in both half spaces $\{x_1>0\}$ and $\{x_1<0\}$. A very similar argument goes through for activation functions $\sigma$ for which the limits $\lim_{z\to \pm \infty} \sigma(z)$ exist and are finite (with quadratic polynomials instead of cubics). Notably, activation functions $\sigma \in \{\sin,\cos,\exp\}$ do not suffer the same obstacle since there second anti-derivative grows at the same rate as the function itself. Considering these activation functions would likely prove fruitful, but take us back into the classical field of Fourier analysis. We pursue a different route and allow the right hand side and solution of a PDE to be represented by neural networks with different activation functions.

Consider a $C^2$-smooth activation function $\sigma$. Then
\[
\Delta [a\,\sigma(w^Tx+b)] = a\,|w|^2\,\sigma''(w^Tx+b).
\]
In particular, for $0<\alpha<1$ we have
\[
\left\|\Delta [a\,\sigma(w^Tx+b)] \right\|_{L^\infty(\R^d)} = |a|\,|w|^2\,\|\sigma''\|_{L^\infty}, \qquad \left[\Delta [a\,\sigma(w^Tx+b)] \right]_{C^{0,\alpha}(\R^d)} = |a|\,|w|^{2+\alpha}[\sigma'']_{C^{0,\alpha}}.
\]
For ReLU activation and functions in Barron space, the Laplacian of $a\,\sigma(w^Tx+b)$ is merely a (signed) Radon measure. On the other hand, if $\sigma \in C^{2,\alpha}(\R)$ is bounded and has bounded first and second derivatives, then we can make sense of the Laplacian in the classical sense. The same is true for the superposition
\[
f_\pi(x) = \int_{\R\times\R^d\times \R} a\,\sigma(w^Tx+b)\,\pi(\d a\otimes \d w\otimes \d b)
\]
if the modified Barron norm
\[\label{eq modified Barron norm}
\int |a|\,[|w|^{2+\alpha}+1]\,\pi(\d a \otimes \d w \otimes \d b)\showlabel
\]
is finite. Unlike applications in data science, most settings in scientific computing require the ability to take derivatives, at least in a generalized sense. For elliptic problems, it seems natural to consider neural networks with different activation functions to represent target function and solution to the PDE:
\begin{align*}
f(x) &= \int a\,\sigma''(w^Tx+b)\,\d\pi\\
u(x) &= \int \tilde a \,\sigma(\tilde w+\tilde b)\,\d\tilde\pi\\
\end{align*}
where
\[
\int|\tilde a|\,[|\tilde w|^{2+\alpha} + 1]\d\tilde\pi<\infty, \qquad\int \frac{|a|}{|w|^2}[|w|^{2+\alpha}+1]\d\pi<\infty.
\]
All considerations were in the simplest setting of the Laplace operator on the whole space. We will discuss the influence of boundary values below.

\subsection{The heat equation}

Parabolic equations more closely resemble the screened Poisson equation than the Poisson equation: The fundamental solution decays rapidly at infinity, and no fast growth at infinity is observed. The (physical) solution $u$ does not grow orders of magnitude faster than the initial condition $u_0$ at any positive time. Moreover, the heat kernel is a probability density for any time and in any dimension, so no dimension-dependent factors are expected.

As a prototypical example of a parabolic equation, we consider the heat equation
\begin{equation}
\begin{pde}
(\partial_t-\Delta) u &=f &t>0\\
u &= u_0&t=0
\end{pde}.
\end{equation}
The solution is given as the superposition of the solution of the homogeneous equation with non-zero initial values and the solution of the inhomogeneous solution with zero initial value:
\begin{align*}
u(t,x) &= u_{hom}(t,x) + u_{inhom}(t,x)\\
	&= \frac{1}{(4\pi t)^{d/2}}\int_{\R^d} u_0(y)\,\exp\left(-\frac{|x-y|^2}{4t}\right)\,\dy\\
	&\hspace{2cm}+ \int_0^t\frac{1}{(4\pi (t-s))^{d/2}}\int_{\R^d}f(s,y)\,\exp\left(-\frac{|x-y|^2}{4(t-s)}\right)\,\dy.
\end{align*}

\begin{lemma}\label{lemma heat equation}
\sw{
Lef $f$ be a ReLU-Barron function of $t$ and $x$. Then $u_{hom}$ is a Barron function of $x$ and $\sqrt{t}$ with Barron norm 
\begin{align*}
\|u_{hom}\|_\B &\leq 2\,\|f\|_\B
\end{align*}
For fixed time $t>0$, $u_{hom}$ and $u_{inhom}$ are Barron functions of $x$ with
\[
\|u_{hom}(t,\cdot)\|_\B \leq \big[1+\sqrt{t}\big], \qquad \|u_{hom}(t,\cdot)\|_\B\leq \left[t + \frac23\,t^{3/2}+ \frac{t^2}2 + \frac25\,t^{5/2}\right]\,\|f\|_\B.
\]
}
\end{lemma}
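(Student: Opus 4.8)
The plan is to reduce the statement about $u_{hom}$ and $u_{inhom}$ to the single-neuron case, exploiting that the Gaussian heat kernel $(4\pi t)^{-d/2}\exp(-|x-y|^2/(4t))$ is a probability density on $\R^d$ for every $t>0$ and the fact, recorded in the review section, that the unit ball of (ReLU-)Barron space is the closed convex hull of normalized single neurons. So it suffices to prove the bounds when $f(t,x) = a\,\sigma(w_0 t + w^T x + b)$ with $|a|\big(|w_0| + |w| + |b|\big) \le 1$, and then invoke linearity and convexity to pass to general Barron $f$.

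For the homogeneous part, $u_{hom}$ only sees the initial datum, so I would first treat the case $f$ independent of $t$; the Gaussian convolution of a single neuron $a\,\sigma(w^T x + b)$ reads $a\int_{\R^d} \sigma\!\big(w^T(x - y) + b\big)\,\rho_t(y)\dy$ where $\rho_t$ is the heat kernel. Writing $z = w^T y$, the only relevant marginal is the one-dimensional Gaussian of variance $2t$ in the direction $w$, so the integral collapses to $a\,\E_{Z\sim N(0, 2t|w|^2)}\big[\sigma(w^T x + b - Z)\big]$. For ReLU one uses $\sigma(\xi - Z) = \sigma(\xi) - Z\,\mathbf 1_{\{Z < \xi\}} + (\text{correction near } \xi)$; more cleanly, one records the identity $\E[\ReLU(\xi - \tau G)] = \xi\,\Phi(\xi/\tau) + \tau\,\varphi(\xi/\tau)$ for a standard Gaussian $G$, which expresses $u_{hom}$ as a superposition over a new bias/scale variable, showing it is a Barron function of $(x,\sqrt t)$ with a controlled representing measure; carefully tracking the $|w|$, $|b|$ and the new $\sqrt t$-direction weights against the normalization $|a|(|w|+|b|)\le 1$ should give the constant $2$. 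For fixed $t>0$ the $\sqrt t$ variable is frozen, so one instead estimates $\|u_{hom}(t,\cdot)\|_\B$ by bounding the $L^1$-mass of the representing measure of the profile in the $x$-variables alone, which produces the $[1 + \sqrt t]$ factor.

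For the inhomogeneous part, I would substitute the single-neuron ansatz $f(s,y) = a\,\sigma(w_0 s + w^T y + b)$ into the Duhamel formula and carry out the spatial Gaussian integral exactly as above, now with variance $2(t-s)|w|^2$, leaving an outer integral $\int_0^t (\cdots)\,\d s$. Collecting the contributions of the $|w|$ term, the $|w_0|$ term (which picks up a factor of $s \le t$), and the new bias/scale terms generated by $\sqrt{t-s}$, and integrating the resulting polynomial-and-half-integer powers of $t$ in $s$ over $[0,t]$, should reproduce exactly $t + \tfrac23 t^{3/2} + \tfrac{t^2}2 + \tfrac25 t^{5/2}$ times $\|f\|_\B$; the four summands correspond to $\int_0^t 1\,\d s$, $\int_0^t \sqrt{t-s}\,\d s$, $\int_0^t s\,\d s$, and $\int_0^t s\sqrt{t-s}\,\d s$ (after the substitution $s \mapsto t - s$ in the half-integer ones), which evaluate to $t$, $\tfrac23 t^{3/2}$, $\tfrac{t^2}2$, and $\tfrac{4}{15}t^{5/2}$ — so I expect a small discrepancy here to be absorbed by a crude bound or a slightly different split, and this bookkeeping is the one place where I would be careful.

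The main obstacle, as in Lemma \ref{lemma screened Laplace}, is not analytic depth but keeping the representation honest: after the Gaussian integral one must genuinely exhibit $u_{hom}$ and $u_{inhom}$ as integrals of ReLU-neurons against a \emph{finite} (signed) measure whose total mass, weighted by $[|w| + |b|]$ (or $[|w_0|+|w|+|b|]$ when $\sqrt t$ is a variable), is bounded — the subtlety is that convolving ReLU with a Gaussian produces a smooth profile, not a ReLU profile, so one needs either to re-expand that smooth one-dimensional profile in the ReLU dictionary (its second derivative is the Gaussian density, so $\|\cdot\|_\B$ of the profile is controlled by $\int |\text{profile}''|\,\cdot\,|\text{arg}|$, a one-dimensional computation) or to appeal to the bound $\|g\|_\B \le \|g\|_{aux}$ together with an explicit Fourier estimate for the profile. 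Either way the work is reduced to a handful of one-dimensional Gaussian integrals, and linearity plus the convex-hull characterization finishes the proof.
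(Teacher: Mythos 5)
Your proposal is correct in substance and follows the same overall strategy as the paper: reduce to single neurons by linearity and convexity, exploit that the heat kernel is a probability density whose only relevant marginal is the one-dimensional Gaussian in the direction $w$, and treat $u_{inhom}$ by applying the fixed-time homogeneous estimate to each slice $f(s,\cdot)$ (whose Barron norm in $x$ is $\le \max\{1,s\}\,\|f\|_\B$ because $w_0s$ is absorbed into the bias) and integrating in $s$. The one genuine difference is at the step you yourself flag as ``the main obstacle'': you integrate out the Gaussian, obtain the smooth profile $\xi\,\Phi(\xi/\tau)+\tau\varphi(\xi/\tau)$, and then propose to re-expand it in the ReLU dictionary via its second derivative (or the auxiliary norm). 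The paper never performs the Gaussian integral at all: after the substitution $z=(y-x)/\sqrt t$ it applies Fubini and reads off
\[
u_{hom}(t,x)=\int_{\R^{d+2}}\int_{\R^d} a\,\sigma\big(w^Tx+(w^Tz)\sqrt t+b\big)\,\tfrac{\exp(-|z|^2/4)}{(4\pi)^{d/2}}\,\dz\,\pi_0(\d a\otimes\d w\otimes\d b),
\]
which is already a Barron representation with parameter law $\pi_0\otimes N(0,2I_d)$ and weight $(w^Tz,w)$ in the variables $(\sqrt t,x)$; the norm bound then follows from $\int|w^Tz|\rho(z)\dz\lesssim|w|$ with no re-expansion and no loss of the joint $(\sqrt t,x)$ structure. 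Your route works for fixed $t$ but is more laborious and makes the joint-in-$(\sqrt t,x)$ claim harder to extract; the ``keep $G$ as the superposition parameter'' variant you mention in passing is exactly the paper's move and is the one you should commit to. Finally, your bookkeeping instinct on the inhomogeneous part is right: $\int_0^t s\sqrt{t-s}\,\ds=\tfrac4{15}t^{5/2}$, so the paper's displayed coefficient $\tfrac25$ is an over-estimate of the exact integral (the stated upper bound remains valid since $\tfrac4{15}<\tfrac25$); this is a slip in the paper, not a gap in your argument.
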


\sw{The proof is a direct calculation using the Green's function. It is presented in the appendix.}

\begin{remark}
\sw{
The dependence on $\sqrt{t}$ is a consequence of the parabolic scaling. The exponent $t^{5/2}$ occurs because the Barron norm of $f(t,\cdot)$ as a function of $x$ scales like $\max\{1,t\}$ in the worst case (the time variable, now a constant, is absorbed into the bias). A further factor of $t$ stems from the increasing length of the interval over which the time-integral in the definition of $u_{inhom}$ is given.
}

\sw{
The same argument applies under the weaker assumption that the source term $f$ is in the Bochner space $L^\infty\big((0,\infty); \B(\R^d)\big)$ with the stronger bound $\|u_{inhom}(t, \cdot)\|_\B \leq C\big[t + t^{3/2}\big]$. Also in Barron-spaces for bounded and Lipschitz-continuous activation, the exponent $5/2$ can be lowered to $3/2$ since the size of the bias term does not enter the Barron norm. 
}
\end{remark}

We therefore can efficiently solve the homogeneous heat equation for a Barron initial condition in both space and time using two-layer neural networks. For the heat equation with a source term, we can solve efficiently for the heat distribution at a fixed finite time $t>0$.

\subsection{A viscous Hamilton-Jacobi equation}

All PDEs we previously considered are linear, and we showed that if the right hand side is in a suitable Barron space, then so is the solution. This is non-surprising as the solution of one of these equations with a ridge function right hand side has a ridge function solution. By linearity, the same is true for superpositions of ridge functions. This structure is highly compatible with two-layer neural networks, which are superpositions of a specific type of ridge function. The linear equations are invariant under rescalings (and rotations), so all problems are reduced to ODEs or 1+1-dimensional PDEs.

The same argument cannot be applied to non-linear equations. For our final example, we study the viscous Hamilton-Jacobi equation
\begin{equation}\label{eq viscous Hamilton-Jacobi}
u_t - \Delta u + |\nabla u|^2 = 0,
\end{equation}
\[
(\partial_t - \Delta)\,u = \frac{v_t}{-v} -\div\left(\frac{\nabla v}{-v}\right) = \frac{v_t}{-v} - \frac{\Delta v}{-v} 
\]
for which an explicit formula is available. If $v$ solves $v_t - \Delta v = 0$, then $u= \log (-v)$ solves
\[
(\partial_t - \Delta) u = -\frac{v_t}v - \div\left(\frac{\nabla v}{-v}\right) = -\frac{(\partial_t-\Delta) u}v - \frac{|\nabla v|^2}{v^2} = 0 - \big|\nabla \log (-v)\big|^2 = -|\nabla u|^2.
\]
Thus the solution $u$ of \eqref{eq viscous Hamilton-Jacobi}
with initial condition $u(0,\cdot) = u_0$ is given by
\begin{align*}
u(t,x) &= -\log\left(\frac1{(4\pi t)^{d/2}} \int_\R \exp\left(-\frac{|x-y|^2}{4 t}\right)\exp\left(-u_0(y)\right)\dy\right).
\end{align*}

\begin{lemma}
Assume that $u_0: \R^d\to [\beta_-,\beta_+]$ is a Barron function. Then the solution $u$ of \eqref{eq viscous Hamilton-Jacobi} as a function of $\sqrt{t}$ and $x$ is in the tree-like function space with $3$ hidden layers and
\[
\|u\|_{\W^{3}} \leq \exp(\beta_+-\beta_-) \,\|u_0\|_{\B}.
\]
\end{lemma}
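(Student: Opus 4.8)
The representation formula already derived reduces the whole statement to the behaviour of a three-fold composition under the tree-like scale. Writing $v(t,x)$ for the solution of the heat equation started from $e^{-u_0}$, namely $v(t,x) = \frac1{(4\pi t)^{d/2}}\int_{\R^d}\exp\!\big(-|x-y|^2/(4t)\big)\,e^{-u_0(y)}\,\dy$, the solution is $u = -\log v$. My plan is to read this as $u = \Phi\big(H[\,G\circ u_0\,]\big)$, where $G(s) = e^{-s}$, $H$ denotes the heat semigroup, and $\Phi(a) = -\log a$, and to count depth layer by layer using the composition estimate $\|f\circ g\|_{\W^{L+\ell}}\le\max_i\|g_i\|_{\W^L}\,\|f\|_{\W^\ell}$: the two scalar nonlinearities $G$ and $\Phi$ each cost one hidden layer, whereas $H$ — being an average, i.e.\ a superposition — costs none. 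Hence $u_0\in\W^1$ is carried into $\W^{1+1+1}=\W^3$.

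First I would treat $G$. Since $u_0$ takes values in the bounded interval $[\beta_-,\beta_+]$, only $G|_{[\beta_-,\beta_+]}$ matters, and any sufficiently regular bounded extension of it to all of $\R$ lies in $\B=\W^1$ with $\|G\|_\B$ controlled by $\sup|G|+\sup|G'|+\dots\lesssim e^{-\beta_-}$. The composition estimate then gives $G\circ u_0 = e^{-u_0}\in\W^2$ with $\|e^{-u_0}\|_{\W^2}\le\|u_0\|_\B\,\|G\|_\B$, and — crucially for the final step — the maximum principle for pointwise composition shows $e^{-u_0}$ takes values in $[e^{-\beta_+},e^{-\beta_-}]$.

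Next comes the heat flow, the step requiring the most care since Lemma \ref{lemma heat equation} is stated only for $\W^1$ data. The mechanism is the same as there: a single neuron $a\,\sigma(w^Tx+b)$ is mapped by $H_t$ to $a\,\E_{g\sim N(0,1)}\big[\sigma(w^Tx-\sqrt{2t}\,|w|\,g+b)\big]$, which is a superposition of neurons in the augmented variable $(x,\sqrt t)\in\R^{d+1}$ whose total Barron cost exceeds that of the original neuron only by a universal constant (coming from $\E\sqrt{1+2g^2}$). Applying this to the innermost layer of the $\W^2$-representation of $e^{-u_0}$, and using that input translations are harmless at the $\W^1$ level, shows that $v=H_t[e^{-u_0}]$, viewed as a function of $(x,\sqrt t)$, lies in $\W^2$ with comparable norm; and since $H_t$ averages against a probability density, $v$ still takes values in $[e^{-\beta_+},e^{-\beta_-}]$.

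Finally, $\Phi=-\log$ is smooth on $[e^{-\beta_+},e^{-\beta_-}]$ with $|\Phi'(a)|=1/a\le e^{\beta_+}$ there, so a regular bounded extension lies in $\B=\W^1$ with $\|\Phi\|_\B\lesssim e^{\beta_+}$; one more application of the composition estimate gives $u=\Phi\circ v\in\W^{2+1}=\W^3$ with $\|u\|_{\W^3}\le\|v\|_{\W^2}\,\|\Phi\|_\B$. Multiplying the three contributions, the dominant factor is $\|G\|_\B\,\|\Phi\|_\B\asymp e^{-\beta_-}\cdot e^{\beta_+}=e^{\beta_+-\beta_-}$. The main obstacle — and the place where the bookkeeping must be done by hand rather than through the black-box composition rule — is to make the heat-flow step rigorous at the two-layer level and to choose the Barron extensions of $G$ and $\Phi$ so that the universal constant from the Gaussian averaging and the extension constants are absorbed, yielding exactly $\|u\|_{\W^3}\le e^{\beta_+-\beta_-}\|u_0\|_\B$ as stated.
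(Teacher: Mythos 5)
Your argument is essentially the paper's own: the same decomposition $u=-\log\circ H_t\circ\exp\circ(-u_0)$, the same Barron bounds for $\exp$ and $\log$ on the relevant bounded intervals (your $e^{-\beta_-}\cdot e^{\beta_+}$ bookkeeping is the paper's $e^{\beta_+}\cdot e^{-\beta_-}$ with the sign placed differently), the same composition rule for $\W^L$, and the same change-of-variables treatment of the heat semigroup in the variables $(\sqrt t,x)$. The universal constant you flag from the Gaussian averaging is real --- the paper's own heat-equation lemma carries a factor $2$ that the stated bound $\exp(\beta_+-\beta_-)\,\|u_0\|_\B$ silently drops --- so your caveat is, if anything, more careful than the source.
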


\begin{proof}
Recall that due to \cite[Section 4.1]{barron_new} we have
\begin{enumerate}
\item $\exp:[\beta_-,\beta_+]\to\R$ is a Barron function with Barron norm $\leq \exp(\beta_+) - \exp(\beta_-) \leq \exp(\beta_+)$, and
\item $\log: [\gamma_-,\gamma_+]\to \R$ is a Barron function with Barron norm $\frac1{\gamma_-} - \frac1{\gamma_+} \leq \frac1{\gamma_-}$ for any $0<\gamma_-<\gamma_+ < \infty$.
\end{enumerate}
The estimates in this precise form hold for ReLU activation, but similar estimates can easily be obtained for more general $\sigma$. However, a function space theory is not yet available for more general $\sigma$.

If $-u_0:\R^d\to [\beta_-, \beta_+]$ has Barron norm $\|u_0\|_{\B(\R^d)}\leq C_0$, then $\exp(-u_0)$ is a function with tree-like three-layer norm $\leq \exp(\beta_+)C_0$. Using the same change of variables as for the heat equation, we find that 
\[
F(t,x) =  \frac1{(4\pi t)^{d/2}} \int_\R \exp\left(-\frac{|x-y|^2}{4 t}\right)\exp\left(-u_0(y)\right)\dy
\]
is a three-layer tree-like function of $\sqrt t$ and $x$. We conclude that $u$ is a tree-like four-layer function of $\sqrt t$ and $x$ with norm
\[
\|u\|_{\W^3(\R^d)} \leq \frac{1}{\exp(\beta_-)}\,\exp(\beta_+)\,C_0 = \exp(\beta_+-\beta_-)\,C_0.
\]
\end{proof}

So a viscous Hamilton-Jacobi equation whose initial condition is a bounded Barron function can be solved using a four-layer ReLU-network (but the parameters may be very large if the oscillation of $u_0$ is not small).

\section{On the influence of boundary values}\label{section boundary}

\subsection{A counterexample on the unit ball}
In all examples above, a critical ingredient was the translation invariance of the PDE, which breaks down if we consider bounded domains or non-constant coefficients. When solving $-\Delta u= 0$ with boundary values $f(x_1)$ on a bounded domain, the solution never depends only on the variable $x_1$ unless $f$ is an affine function. The Barron space theory of PDEs on bounded domains is therefore markedly different from the theory in the whole space.

\begin{lemma}
Let $\sigma(z) = \max\{z,0\}$ be ReLU-activation and $g(x) = \sigma(x_1)$ a Barron function on $\R^d$ for $d\geq 2$. Denote by $B^d$ the unit ball in $\R^d$ and denote by $u$ the solution to the PDE
\begin{equation}
\begin{pde}
-\Delta u &= 0 &\text{in }B^d\\
u &= g &\text{on }\partial B^d
\end{pde}.
\end{equation}
If $d\geq 3$, $u$ is not a Barron function on $B^d$.
\end{lemma}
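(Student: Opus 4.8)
The plan is to exploit the structure theorem for Barron functions (item (3) in the list of properties above): any $u\in\B(B^d)$ decomposes as $u=\sum_i g_i(P_ix+b_i)$ where each $g_i$ is $C^1$ away from the origin, $P_i$ is an orthogonal projection onto a subspace of dimension $k_i\le d-1$, and $b_i$ is a shift. The point is that such a function has a very restricted singular set: $u$ can only fail to be $C^1$ along a countable union of affine subspaces of codimension $\ge 1$, and more precisely the non-smoothness is ``directional'' — along the fibers of the projections $P_i$. I would contrast this with the actual regularity of the harmonic extension $u$ of the boundary datum $g(x)=\sigma(x_1)=\max\{x_1,0\}$ on $\partial B^d$. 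Since $g$ is Lipschitz but only piecewise smooth, with a ``crease'' along the equatorial sphere $\{x_1=0\}\cap\partial B^d$, the harmonic extension $u$ will inherit a singularity that propagates into the interior, and I would argue this interior singular set is geometrically incompatible with the flat/directional singular sets permitted by the structure theorem.

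**Key steps, in order.**
First, I would record that $u$ is real-analytic in the open ball $B^d$ (interior regularity of harmonic functions), so any failure of the structure theorem must be detected at the boundary or in boundary behavior — hence I should work with $u$ up to $\partial B^d$, or equivalently with the even/odd symmetry structure. Here I would use the symmetry of the problem: reflecting $x_1\mapsto -x_1$ sends $g$ to $\max\{-x_1,0\}$, and $g(x)+g(-x_1,x')=|x_1|$ is the restriction to $\partial B^d$ of the harmonic extension of $|x_1|$; meanwhile $g(x)-g(-x_1,x')=x_1$, whose harmonic extension is just $x_1$. So $2u(x) = x_1 + v(x)$ where $v$ is the harmonic extension of $|x_1|\big|_{\partial B^d}$; since $x_1$ is trivially Barron, it suffices to show $v\notin\B(B^d)$. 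Second, I would analyze $v$ near a point $p$ on the equator $E=\{x_1=0\}\cap\partial B^d$. The boundary datum $|x_1|$ has a gradient jump across $E$, and I would show (by a local barrier/reflection argument, or by expanding in the half-space model where the harmonic extension of $|x_1|$ restricted to a hyperplane has an explicit logarithmic-type singularity) that $v$ fails to be $C^1$ on a neighborhood of $E$ inside $\overline{B^d}$ — in fact $\nabla v$ blows up or has a genuine discontinuity along the $(d-2)$-dimensional set $E$. Third, and this is the crux, I would show that this singular behavior cannot come from a convergent sum $\sum_i g_i(P_ix+b_i)$ of the structure-theorem form: the singular set of each summand is (contained in) an affine subspace $\{P_i x + b_i \in (\text{singular set of }g_i\text{, which sits at }0)\}$, i.e.\ a finite union of affine subspaces, and the directional nature of the non-smoothness (each $g_i$ is smooth except at a point, pulled back along a linear projection) means the ``crease direction'' is constant along each such subspace. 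But the equatorial sphere $E\subset S^{d-1}$ is a curved $(d-2)$-sphere when $d\ge 3$, so no finite or countable union of flat pieces with constant crease directions can reproduce a gradient discontinuity of $v$ that is genuinely curved along $E$; a careful version of this — e.g.\ testing second derivatives in tangential directions, or localizing near a point of $E$ and using that a Barron function's one-sided directional derivatives exist along generic lines — yields a contradiction. The hypothesis $d\ge 3$ enters precisely here: when $d=2$, $E$ is two points, which is ``flat enough'' and consistent with the structure theorem, so the counterexample genuinely needs the curvature of the equator.

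**Main obstacle.**
The hard part will be the third step: turning ``the structure theorem only allows flat/directional singularities'' into a rigorous contradiction with the curved singularity of $v$ along $E$. I expect the cleanest route is quantitative — estimate $\|v\|_{\B}$ from below by testing against a suitable functional (e.g.\ a second-difference quotient in a direction tangent to $E$, which is $O(1)$ for flat-creased functions but blows up for $v$), or to pass to a blow-up limit at a point $p\in E$: rescaling $v$ around $p$, the limit should be (the harmonic extension in a half-space of) $|x_1|$ on a hyperplane, whose Barron norm is infinite because its singular set is a hyperplane $\{x_1=0\}$ of the full ambient dimension $d-1$ in the relevant $(d-1)$-dimensional trace — contradicting property (3), which caps the projection rank at $d-1$ but more importantly forbids the function $|x_1|$-type crease in $\R^{d-1}$ from being Barron when combined with the curvature information lost in the blow-up. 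Either way, I would need to check that the Barron norm does not concentrate or escape under these limiting operations, which is the one genuinely delicate estimate; everything else (interior analyticity, the symmetry reduction to $|x_1|$, the local non-$C^1$ behavior of $v$ at $E$) is standard potential theory.
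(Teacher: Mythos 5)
Your overall strategy is the paper's: invoke the structure theorem for Barron functions, locate a singularity of $u$ on the curved equatorial sphere $\Sigma=\partial B^d\cap\{x_1=0\}$, and argue that a curved $(d-2)$-dimensional singular set is incompatible with the countable union of affine subspaces that the structure theorem permits. The symmetry reduction $2u=x_1+v$ and the interior-analyticity remark are correct but not needed. However, there is a genuine gap at what you yourself identify as the crux: in your third step you never actually derive the contradiction, but instead list candidate methods (blow-up limits at a point of $E$, second-difference functionals, lower bounds on $\|v\|_{\B}$) and correctly note that each would require a delicate argument about whether the Barron norm concentrates or escapes in the limit. None of these is carried out, and the blow-up route in particular would need the structure theorem to behave well under rescaling, which you have not established.

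The paper closes this step with an elementary covering argument that avoids all of this. First, the non-differentiability of $u$ at every point of $\Sigma$ follows immediately from tangency: at a point of $\Sigma$ the direction $e_1$ is tangent to $\partial B^d$, so the derivative of $u$ along the sphere in direction $e_1$ is forced to equal $\partial_1 g$, which jumps across $\Sigma$; no barrier, reflection, or half-space expansion is required (and the gradient does not in fact blow up logarithmically --- it is bounded with a jump). Second, each affine subspace $A$ appearing in the singular set of a Barron function either coincides with the hyperplane $\{x_1=0\}$ or intersects the strictly curved $(d-2)$-sphere $\Sigma$ in a set of $\mathcal{H}^{d-2}$-measure zero; since countably many null sets cannot cover $\Sigma$ (which has positive $\mathcal{H}^{d-2}$-measure when $d\geq 3$), the hyperplane $\{x_1=0\}$ itself must carry the singularity. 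But then $u$ fails to be differentiable on all of $\{x_1=0\}$, including in the interior of $B^d$, contradicting the smoothness of harmonic functions. This is the one missing ingredient in your write-up; with it, your argument becomes essentially identical to the paper's, and the quantitative machinery you propose in the ``main obstacle'' paragraph can be discarded.
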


\begin{proof}
Assume for the sake of contradiction that $u$ is a Barron function and $d\geq 3$. If $u$ is a Barron function, then $u$ is defined (non-uniquely) on the whole space $\R^d$ by the explicit representation formula. We observe that $u\in C^\infty_{loc}(B^d)$ and that $\partial_1u = \partial_1g$ is discontinuous on the equatorial sphere $\partial B^d \cap \{x_1=0\}$ since $e_1$ is tangent to the unit sphere at this point. Thus $\Sigma:= \partial B^d \cap \{x_1=0\}$ is contained in the countable union of affine spaces on which $u$ is not differentiable. 
If $d>2$, $\Sigma$ is a $d-2$-dimensional curved submanifold of $\R^d$ and any $d-1$-dimensional subspace which does not coincide with the hyperplane $\{x_1=0\}$ intersects $\Sigma$ in a set of measure zero (with respect to the $d-2$-dimensional Hausdorff measure). Thus $u$ must be non-differentiable on the entire hyper-plane $\{x_1=0\}$, leading to a contradiction.
\end{proof}

Note that this argument is entirely specific to ReLU-activation and to two-layer neural networks. On the other hand, if $\sigma\in C^\infty$, then also $u\in C^\infty(\overline{B^d})$, and thus in particular $u\in \B$ (without sharp norm estimates). 
More generally, assume we wish to solve the Laplace equation on the unit ball in $d$ dimensions with boundary values which are a one-dimensional profile
\begin{equation}
\begin{pde}
-\Delta u(x) &= 0 &|x|<1\\
u(x) &= g(x_1) & |x|=1.
\end{pde}
\end{equation}
We may decompose $u = g+v$ where 
\begin{equation}
\begin{pde}
-\Delta v&= g''(x_1) &|x|<1\\
v &=0 &|x|=1
\end{pde}.
\end{equation}
If we abbreviate $(x_2,\dots, x_d) = \hat x$, it is clear by symmetry that $v$ only depends on $|\hat x|$, i.e.\ $v = \psi(x_1, |\hat x|)$ where
\[\showlabel
\begin{pde}\Delta \psi + \frac{d-2}{y_2}\,\partial_2\psi &= - g''(y_1) &y_1^2 + y_2^2 <1, \:y_2>0\\ \psi &= 0 &\{y_1^2 + y_2^2 =1, y_2>0\}  \end{pde}.
\]
Since solutions of the original equation are smooth, we conclude that $\partial_2\psi =0$ on $\{y_2=0\}$, meaning that also $\psi=0$ on the remaining portion $\{y_2=0\} \cap \{y_1^2+y_2^2\leq 1\}$ of the boundary.

Thus the solution of Laplace's equation with ridge function boundary values is not a ridge function, but enjoys a high degree of symmetry nonetheless. Instead of using neural networks as one-dimensional functions of $w^Tx$, it may be a more successful endeavour to consider superpositions of one-dimensional functions of the two-dimensional data
\[
 \left( w^T x,\:\: \sqrt{|x|^2 - \left|\frac{w^T}{|w|}x\right|^2}\right) = \left(w^Tx, \left\|\left(I - \frac{w}{|w|}\otimes \frac w{|w|}\right)x\right\|\right).
\]
The second component in the vector is a (ReLU-)Barron function. Thus, if $\psi:\R^2\to \R$ is a Barron function, then 
\[
u(x) = \psi \left(w^Tx, \left\|\left(I - \frac{w}{|w|}\otimes \frac w{|w|}\right)x\right\|\right)
\]
is a tree-like function of depth three. In the unit ball, this modified data can be generated explicitly, while in more complex domains, it must be learned by a deeper neural network. Whether $\psi$ is in fact a Barron function currently remains open.

\subsection{A counterexample on a ``Barron''-domain}

In the previous example, we showed that the harmonic function on a domain with Barron-function boundary values may not be a Barron function, but might be a tree-like function of greater depth. We may be tempted to conjecture the following: {\em If the boundary of a domain $U$ can locally be written as the graph of a Barron function over a suitable hyperplane and $g:\partial U\to \R$ is a Barron function, then the harmonic function $u$ on $U$ with boundary values $g$ is a tree-like function.} Such domains coincide with domains with `Barron boundary' considered in \cite{barron_boundaries}. This is generally false, as a classical counterexample to regularity theory on nonconvex Lipschitz domains shows.

Consider the planar domain
\[
D_\theta = \{x\in \R^2 : 0 < \phi < \theta\}
\]
where $\phi\in (0,2\pi)$ is angular polar coordinate and $0<\theta<2\pi$. For $k\in \N$, consider 
\[
u_{k,\theta}:D_\theta\to \R,\qquad u_{k,\theta}(r,\phi) = r^{\frac{k\pi}\theta}\,\sin\left(\frac{k\pi}\theta\,\phi\right).
\]
Observe that
\begin{itemize}
\item $-\Delta u_{k,\theta} = 0$ for all $k\in \N$, $0<\theta<2\pi$.
\item For $k=1$ and $\pi<\theta<2\pi$, we note that $u_{k,\theta}$ is not Lipschitz continuous at the origin. 
\end{itemize}
We can consider bounded domains $\widetilde D_\theta$ such that $\widetilde D_\theta \cap B_1(0)= D_\theta \cap B_1(0)$ and either 
\begin{enumerate}
\item $\partial \widetilde D_\theta$ is polygonal or
\item $\partial \widetilde D_\theta$ is $C^\infty$-smooth away from the origin.
\end{enumerate}
In either case, $\partial \widetilde D_\theta$ can be locally represented as the graph of a ReLU-Barron function. 
Since $u_{k,\theta}\equiv 0$ on $\partial D_\theta$ and $u_{k,\theta}$ is smooth away from the origin, we find that $u_{k,\theta}$ is $C^\infty$-smooth on $\partial \widetilde D_\theta$. In particular there exists a Barron function $g_{k,\theta}$ such that $g_{k,\theta}\equiv u_{k,\theta}$ on $\partial \widetilde D_\theta$. The unique solution to the boundary value problem
\[
\begin{pde}
-\Delta u &= 0 &\text{in }\widetilde D_\theta\\
u &= g_{k,\theta} &\text{on }\partial \widetilde D_\theta
\end{pde}
\]
is $u_{k,\theta}$ itself. Again, for $k=1$ and $\pi<\theta<2\pi$, this harmonic function is not Lipschitz-continuous on the closure of $\widetilde D_\phi$, and thus in particular not in any tree-like function space.

As we observed in Section \ref{section laplace}, classical Barron spaces as used in data-scientific applications behave similar to $C^{0,\alpha}$-spaces in PDE theory and another scale of $C^{2,\alpha}$-type may be useful. Such spaces may also describe more meaningful boundary regularity.

\subsection{Neural network spaces and classical function spaces}

In this section, we briefly sketch some differences between specifically Barron spaces and classical function spaces, which we expect to pose challenges in the development of a regularity theory in spaces of neural network functions.

Classical regularity theory even in the linear case uses nonlinear coordinate transformations like straightening the boundary and relies on the ability to piece together local solutions using a partition of unity. There are major differences between classical function spaces and Barron spaces. Recall that
\begin{enumerate}
\item if $u, v\in C^{2,\alpha}$, then also and $u\circ v \in C^{2,\alpha}$, and
\item  if $u\in C^{2,\alpha}$ and $a\in C^{0,\alpha}$, then also $a\,\partial_i\partial_j u\in C^{0,\alpha}$. 
\end{enumerate}
Neither property holds in Barron space in dimension $d\geq 2$. The first property is important for coordinate transformations and localizing arguments, the second when considering differential operators with variable coefficients. The properties also fail in Sobolev spaces, which is why the boundary and coefficients of a problem are typically assumed to have greater regularity than is expected of the solution or its second derivatives respectively (e.g.\ bounded measurable coefficients, $C^2$-boundaries, \dots).

While spaces of smooth functions are invariant under diffeomorphisms, Barron-space is only invariant under linear transformations: If $\phi:\R^d\to\R^d$ is a non-linear diffeomorphism, there exists a hyperplane $H =\{w^T\cdot + b=0\}$ in $\R^d$ such that $\phi^{-1}(H)$ is not a linear hyperplane. Thus the function $u(x) = \sigma \big(w^T\phi(x)+b)$ is not a Barron-function since its singular set is not straight.

Compositions of tree-like functions are tree-like functions (for deeper trees) and compositions of flow-induced functions for deep ResNets \cite{weinan2019lei} are flow-induced functions (but flow-induced function classes are non-linear). This is the first major difference between spaces of neural networks and spaces of classically smooth functions which we want to point out. While in the Sobolev setting, greater regularity is assumed, we believe that in the neural network setting, deeper networks should be considered.

The second difference is about the `locality' of the function space criterion. Note that a function $u$ is $C^{k,\alpha}$- or $H^k$-smooth on a domain $U$ if and only if there exists a finite covering $\{U_1,\dots, U_N\}$ of $U$ by open domains such that $u|_{U_k}$ is in the appropriate function space. The smoothness criterion therefore can be localized. Even in ostensibly non-local fractional order Sobolev spaces, the smoothness criterion can be localized under a mild growth or decay condition.

In general, we cannot localize the Barron property in the same way. We describe a counter-example for Barron spaces with ReLU-activation, but we expect a similar result to hold in more general function classes. Consider a U-shaped domain in $\R^2$, e.g.\ 
\[
U = \underbrace{\big((0,1)\times (0,3) \big)}_{\text{left column}} \cup \underbrace{\big((2,3)\times (0,3)\big)}_{\text{right column}} \cup\underbrace{\big( (0,3) \times (0,1)\big)}_{\text{horizontal bar}}.
\]
The function 
\[
f: U\to \R, \qquad f(x) = \begin{cases} \sigma(x_2-1) & x_1<1.5\\ 0 &x_1\geq 1.5\end{cases}
\]
is in Barron space on each of the domains $U_1 = \{x\in U : x_1< 2\}$ and $U_2 = \{x\in U : x_1>1\}$, but overall $f$ is not in Barron space since the set of points on which a Barron function is non-differentiable is a countable union of points and lines in $\R^2$. In particular, $f$ could not be differentiable on $(2,3)\times\{2\}$, but clearly is.

\bibliographystyle{../../alphaabbr}
\bibliography{../../NN_bibliography}

\newpage

\appendix
\section{Proofs}

\begin{proof}[Proof of Lemma \ref{lemma heat equation}]
Assume specifically that $u_0\in \B(\R^d)$ and $f\in \B(\R^{d+1})$. With the substitution $z=\frac{y-x}{\sqrt{t}}$ we obtain
\begin{align*}
u_{hom}(t,x) &= \frac{1}{(4\pi t)^{d/2}} \int_{\R^d} u_0(y)\,\exp\left(-\frac{|x-y|^2}{4t}\right)\dy\\
	&= \frac{1}{(4\pi)^{d/2}}\int_{\R^d} u_0(x+\sqrt{t}\,z)\,\exp(-|z|^2/4)\dz\\
	&= \frac{1}{(4\pi)^{d/2}}\int_{\R^d} \int_{\R^{d+2}} a\,\sigma\left(w^T\big(x+\sqrt{t}\,z\big)+b\right)\pi_0(\d a\otimes \d w\otimes \d b)\,\exp(-|z|^2/4)\dz\\
	&= \int_{\R^{d+2}}a\,\int_{\R^d} \sigma\left(w^T\big(x+\sqrt{t}\,z\big)+b\right) \frac{\exp(-|z|^2/4)}{(4\pi)^{d/2}}\dz\,\pi_0(\d a\otimes \d w \otimes \d b).
\end{align*}
In particular, $u$ is a Barron function of $(\sqrt{t},x)$ on $(0,\infty)\times \R^d$ with Barron norm 
\begin{align*}
\|u_{hom}\|_{\B\big((0,\infty)\times \R^d\big)} &\leq \int_{\R^{d+2}}|a|\int_{\R^d} \big[|w| + |w^Tz| + |b|\big] \frac{\exp(-|z|^2/4)}{(4\pi)^{d/2}}\dz\,\pi_0(\d a\otimes \d w \otimes \d b)\\
	&\leq 2 \|u_0\|_\B
\end{align*}
in both the ReLU and general case. There is no dimension dependence in the integral since up to rotation $|w^Tz|= |w|\,|z_1|$, so the integral reduces to the one-dimensional case. This agrees with the intuition that $u$ is the superposition of solutions of $(\partial_t-\Delta)u=0$ with initial condition given by the one-dimensional profile $\sigma(w^Tx+b)$. Instead of considering high-dimensional heat kernels, we could have reduced the analysis to $1+1$ dimensions.

The fact that $u$ is a Barron function of $\sqrt{t}$ rather than $t$ is due to the parabolic scaling of the equation. 
For fixed time $t>0$, the same argument shows that in the $x$-variable we have
\[
\|u_{hom}\|_{\B\big(\{t\}\times \R^d\big)} \leq \int_{\R^{d+2}\times\R^d}|a| \big[|w|\,+\sqrt{t}|w^Tz|+|b|\big] \,\d\pi\dz \leq \big[1+ \sqrt{t}\big] \|u_0\|_\B.
\]
The inhomogeneous part of $u$ is a superposition of Barron functions in $x$ and $\sqrt{t-s}$ for $0<s<t$, which does not allow us to express $u$ as a Barron function of both space and time in an obvious way. However, since $f(t,\cdot)$ is a Barron function in space with norm $\|f(t,\cdot)\|_{\B(\R^d)} \leq \|f\|_{\B((0,\infty)\times\R^d)}\max\{1,t\}$, we use the analysis of the homogeneous problem to obtain that
\begin{align*}
\big\|u_{inhom}(t,\cdot)\big\|_{\B(\R^d)} &\leq \int_0^t \|f(s,\cdot)\|_{\B(\R^d)}\,\big[1+\sqrt{t-s}\big]\ds\\
	&\leq  \|f\|_{\B((0,\infty)\times\R^d)} \int_0^t \max\{1,s\}\,\big[1+\sqrt{t-s}\big]\ds\\
	&= \left[t+ \frac 23 \,t^{3/2} + \frac{t^2}2 + \frac25\,t^{5/2}\right]\,\|f\|_{\B((0,\infty)\times\R^d)}.
\end{align*}
The Barron norm of the combined solution $u(t,\cdot) = u_{hom}(t,\cdot) + u_{inhom}(t,\cdot)$ therefore grows at most approximately like $t^{5/2}$ in time, independently of dimension $d$.
\end{proof}

\end{document}